\newtheorem{thm}{Theorem}[section]
\newtheorem{lemma}[thm]{Lemma}
\theoremstyle{definition}
\newtheorem{remark}[thm]{Remark}
\def\XXint#1#2#3{{\setbox0=\hbox{$#1{#2#3}{\int}$}
         \vcenter{\hbox{$#2#3$}}\kern-.5\wd0}}
\def\R{\mathbb{R}}
\def\e{\varepsilon}
\def\wN{N^*}
\numberwithin{equation}{section}
\begin{document}

\title{Critical Sets of Elliptic Equations with Rapidly Oscillating Coefficients in  Two Dimensions}

\author{Fanghua Lin \thanks{Supported in part by NSF grant DMS-1955249.}
\qquad
Zhongwei Shen\thanks{Supported in part by NSF grant DMS-1856235 and by Simons Fellowship}}
\date{}
\maketitle
\begin{abstract}

In this paper we continue the study of critical sets of solutions $u_\e$  of second-order elliptic equations in divergence form with rapidly oscillating and periodic coefficients.
In \cite{Lin-Shen-3d}, by controling the "turning" of approximate tangent planes,
we show that the $(d-2)$-dimensional Hausdorff  measures of the critical sets are bounded uniformly with respect to the period $\e$, 
provided that  doubling indices for solutions are bounded.
In this paper we use a different approach, based on the reduction of the doubling indices  of $u_\e$,
to study the two-dimensional case.
The proof relies on the fact that the critical set of a homogeneous harmonic polynomial of degree two or higher in dimension two contains only one point.

\medskip

\noindent{\it Keywords}: Critical Set;  Homogenization;  Doubling Index.

\medskip

\noindent {\it MR (2010) Subject Classification}: 35J15, 35B27.

\end{abstract}

\section{Introduction}

In this paper we continue the study of critical points of solutions of elliptic equations in homogenization.
More precisely, we consider a family of second-order elliptic operators in divergence form,
\begin{equation}\label{op}
\mathcal{L}_\e =-\text{\rm div} (A(x/\e) \nabla ),
\end{equation}
where $0< \e\le 1$ and  $A(y) = (a_{ij} (y))$ is a  $d\times d$ matrix-valued function in $\R^d$.
Throughout the paper, unless indicated otherwise, we shall assume that 
\begin{itemize}

\item

(ellipticity) there exists some $\lambda\in (0, 1]$ such that
\begin{equation}\label{ellipticity}
\lambda |\xi|^2 \le \langle A(y) \xi, \xi \rangle\quad \text{ and } \quad 
 |\langle A(y)\xi, \zeta\rangle | \le \lambda^{-1} |\xi| |\zeta| \quad \text{ for any } y, \xi, \zeta\in \R^d;
\end{equation}

\item 

(periodicity) $A$ is periodic with respect to some lattice $\Gamma$ of $\R^d$,
\begin{equation}\label{periodicity}
A(y+z)  =A (y) \quad \text{ for any } y\in \R^d \text{ and } z\in \Gamma;
\end{equation}

\item

(smoothness) there exists some $M>0$ such that
\begin{equation}\label{smoothness}
|A(x)-A(y)| \le M |x-y| \quad \text{ for any } x, y \in \R^d.
\end{equation}
\end{itemize}
We will use the notation,
\begin{equation}
E_r = \big\{ x \in \R^d: \ 2\langle (\widehat{A} + (\widehat{A})^T)^{-1} x,  x\rangle <  r^2 \big\}
\end{equation}
for $r>0$,
where $\widehat{A}$ denotes the homogenized matrix for $A$. If $\widehat{A} + (\widehat{A})^T=2I$, then 
$E_r= B(0, r)$.

Let $\chi (y)=(\chi_j (y))$ denote the first-order corrector for $\mathcal{L}_\e$.
We will also assume that the periodic matrix $I +\nabla \chi$ is nonsingular  and that
\begin{equation}\label{inv-0}
\text{det} (I+\nabla \chi)  \ge  \mu
\end{equation}
for some $\mu>0$.
Let $u_\e$ be a non-constant  weak solution of $\mathcal{L}_\e (u_\e)=0$ in  $E_2$ and
\begin{equation}\label{critical-0}
\mathcal{C} (u_\e) =\big\{ x: \ |\nabla u_\e (x)|=0 \big\},
\end{equation}
the critical set of $u_\e$.
Suppose that $u_\e (0)=0$ and
\begin{equation}\label{doubling-0}
\fint_{E_2} u_\e^2 \le 4^N  \fint_{E_1} u_\e^2
\end{equation}
for some $N\ge 1$.
Under  the conditions \eqref{ellipticity}, \eqref{periodicity},  \eqref{smoothness} and \eqref{inv-0},
it is proved in \cite{Lin-Shen-3d} by the present authors that 
\begin{equation}\label{H-M-00}
|  \big\{ x : \text{\rm dist} (x, \mathcal{C}(u_\e)\cap E_{1/2}  )< r \big\}| 
\le C(N) r^2
\end{equation}
for $0< r< 1$, 
and consequently, 
\begin{equation}\label{H-M-0}
\mathcal{H}^{d-2} \big\{ x\in E_{1/2}: \ |\nabla u_\e (x)|=0 \big\}
\le C(N),
\end{equation}
where $C(N)$ depends at most on $d$, $\lambda$, $\Gamma$, $M$, $\mu$,  and $N$.
This is the first result on geometric measure estimates, that are uniform in $\e>0$, for critical sets of solutions of
$\mathcal{L}_\e (u_\e)=0$.
We mention that in  \cite{LS-Nodal},
 the following uniform  bound of the nodal sets,
\begin{equation}\label{Nodal-2}
\mathcal{H}^{d-1} \big\{ x\in E_{1/2}: \ u_\e (x)=0 \big\}
\le C(N),
\end{equation}
was established  by the present authors, under the conditions \eqref{ellipticity}, \eqref{periodicity} and \eqref{smoothness}.
Classical results in the study of nodal, singular, and critical sets for solutions and eigenfunctions of elliptic operators
may be found in \cite{D-Fefferman, Hardt-Simon, Lin-N, Han-1994,  Han-Lin-2, Han-Lin-H-1998, HHN-1999}.
We refer the reader to \cite{Naber-2015, Naber-2017, BET-2017,  Logunov-2018-1, Logunov-2018-2} and their references for more recent work in this area.
Since the bounding constants $C(N)$ depend on the smoothness of coefficients,
the quantitative results for $\mathcal{L}_1$  in the references mentioned above do not extend to the operator $\mathcal{L}_\e$.

The proof of  \eqref{H-M-00} in \cite{Lin-Shen-3d} is based on an estimate of "turning" for the projection of a non-constant solution $u_\e$ onto the subspace 
of spherical harmonic order $\ell$, when the doubling index for $u_\e$ on a sphere $\partial B(0, r)$ is trapped
between $\ell-\delta$ and $\ell +\delta$, for $r$ between $1$ and a minimal radius $r^*\ge C_0 \e$.
In this paper we provide a different and  much simpler proof for the two-dimensional case. 
Our approach is based on the reduction of the doubling index and relies on the fact that the 
critical set of a homogeneous harmonic polynomial of degree $2$ or higher in dimension two contains  only one point.
We note  that
the condition \eqref{inv-0} holds in the case $d=2$ if $A$ is periodic and H\"older continuous.

The following is the main result of the paper.

\begin{thm}\label{main-thm}
Let $d=2$.
Assume that $A=A(y)$ satisfies the conditions \eqref{ellipticity}, \eqref{periodicity} and \eqref{smoothness}.
Let $u_\e\in H^1(E_2)$ be a non-constant weak solution of $\mathcal{L}_\e (u_\e)=0$ in $E_2\subset \R^2$.
Suppose that $u_\e (0)=0$ and \eqref{doubling-0} holds for some $N\ge 1$. Then
\begin{equation}\label{main-1}
\# (E_{1/2}\cap \mathcal{C} (u_\e)) \le C(N),
\end{equation}
where $C(N)$ depends at most on $\lambda$, $\Gamma$, $M$, and $N$.
\end{thm}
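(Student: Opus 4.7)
The plan is to prove Theorem \ref{main-thm} by induction on the doubling index $N$, reducing via quantitative homogenization to the case of harmonic functions on planar disks and then exploiting the rigidity that a homogeneous harmonic polynomial of degree $\ell\ge 2$ in two variables has exactly one critical point, at its center. An initial linear change of variables lets us assume $\widehat{A}=I$, so that $E_r=B(0,r)$ and the homogenized operator is $-\Delta$.

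First, I would apply a quantitative homogenization estimate to approximate $u_\e$ on any ball $B(x_0,r)\subset E_1$ with $r\ge r^*:=C_0\e$ by a harmonic function $u_0$ with
\[
\|u_\e-u_0\|_{L^\infty(B(x_0,r/2))}\le C\,(\e/r)^\alpha\left(\fint_{B(x_0,r)}u_\e^2\right)^{1/2}.
\]
Since $\partial_z u_0$ is holomorphic, its zeros (the critical points of $u_0$) in $B(x_0,r/2)$ are counted by Jensen's formula in terms of the doubling index of $u_0$ at $B(x_0,r)$. If that doubling index lies within a small $\delta$ of an integer $\ell\ge 2$, the leading harmonic part of $u_0$ at $x_0$ is homogeneous of degree $\ell$, and the uniqueness of its critical point forces all critical points of $u_0$ inside $B(x_0,r/2)$---and hence also of $u_\e$, via the homogenization error combined with the implicit function theorem---to lie in a strictly smaller concentric ball $B(x_0,\tau r)$ with $\tau\ll 1/2$.

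Second, I would establish the key reduction lemma: whenever $u_\e$ has doubling index $\le N$ on $B(x_0,r)$ with $r\ge Cr^*$, the set $\mathcal{C}(u_\e)\cap B(x_0,r/2)$ can be covered by at most $J_0=J_0(N)$ sub-balls $B(x_j,r_j)$ with $r_j\ge c\,r$, on each of which the doubling index of $u_\e$ is at most $N-1$. The proof uses the concentration above: critical points cluster in a small $B(x_j,\tau r)$ around a single center; recentering there and choosing a sub-scale slightly above $\tau r$ peels off the leading degree-$\ell$ harmonic, so the new doubling index is governed by the next Taylor term and strictly decreases. Iterating at most $N$ times drives the index below $2$, at which stage $u_\e$ is close to an affine function and therefore has no critical points; tracking the branching yields $\#(\mathcal{C}(u_\e)\cap E_{1/2})\le J_0^N=:C(N)$.

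The main obstacle is making the peeling step fully quantitative and propagating the homogenization error consistently across the iteration, so that the new doubling index after recentering is provably strictly smaller. A secondary issue is the terminal scales $r\sim\e$, where the harmonic approximation breaks down; there one falls back on the pointwise gradient control of $u_\e$ afforded by \eqref{inv-0} (which for $d=2$ holds automatically under the Lipschitz hypothesis \eqref{smoothness}) to bound the number of critical points inside each $\e$-ball by an absolute constant, thereby closing the count with a bound depending only on $\lambda$, $\Gamma$, $M$, and $N$.
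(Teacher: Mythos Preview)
Your overall philosophy matches the paper's---reduce to harmonic approximations, exploit the two-dimensional rigidity that a homogeneous harmonic polynomial of degree $\ell\ge 2$ has a single critical point, and run an induction on the doubling index. The genuine gap is your ``key reduction lemma.'' The claim that after clustering the critical points into $B(x_j,\tau r)$ and recentering, the doubling index \emph{strictly decreases} to at most $N-1$, is false. The model case $u_0=\mathrm{Re}\,z^\ell$ already breaks it: its only critical point is the origin, so recentering changes nothing, and at every smaller scale the doubling index is still exactly $\ell$. Zooming in makes the leading homogeneous part dominate \emph{more}, not less; there is no ``next Taylor term'' that takes over. Consequently your iteration need not terminate in $N$ steps, and the bound $J_0^N$ is unjustified.

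The paper repairs exactly this point with a dichotomy rather than a guaranteed drop. Using a Vitali cover of $\mathcal{C}(u_\e)\cap B(0,1/4)$ by balls of radius $1/40$, either (i) all critical points sit in a single sub-ball, in which case one passes to scale $r/2$ with the \emph{same} index bound $\ell+\delta_0$, or (ii) there are at least two Vitali centers, which forces, for every $x$ near any center, a critical point in the annulus $B(x,3/4)\setminus B(x,1/128)$; a compactness argument (Lemma~\ref{2-d-lemma}) then shows the index must drop below $\ell-\delta_0$, and a separate frequency-gap lemma pushes it down to $\ell-1+\delta_0$. This yields the recursion $\mathcal{E}(\ell+\delta_0,\e,r)\le\max\{\mathcal{E}(\ell+\delta_0,\e,r/2),\,C_0\,\mathcal{E}(\ell-1+\delta_0,\e,\theta r)\}$. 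Branch (i) can repeat indefinitely, but it halves the scale each time and eventually reaches $r\lesssim\e$, where one invokes the classical critical-set bounds for Lipschitz coefficients (Han; Naber--Valtorta), not merely \eqref{inv-0}. Your terminal-scale sketch is also too optimistic: \eqref{inv-0} by itself does not bound the number of critical points in an $\e$-ball; it is used only to transfer the vanishing of $\nabla u_\e$ to $\nabla u_0$ in the compactness step.
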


Throughout the paper we will use $C$ and $c$ to denote constants that may depend on $d$, 
 $\lambda$ in \eqref{ellipticity}, $\Gamma$ in \eqref{periodicity},  $M$
in \eqref{smoothness}, and $\mu$ in \eqref{inv-0}.
If a constant also depends on other parameters, such as the doubling index of a solution, it will be stated explicitly.


\section{Homogenization}

Let $d\ge 2$ and $A=A(y)$ be a $d\times d$ matrix satisfying  \eqref{ellipticity} and \eqref{periodicity}.
 The  first-order corrector $\chi =\chi(y) =(\chi_j(y) )$ is defined by the cell problem,
\begin{equation}\label{cell-1}
\left\{
\aligned
& \mathcal{L}_1 (\chi_j) =-\frac{\partial}{\partial y_i} \big( a_{ij} \big) \quad \text{ in } Y,\\
& \fint_Y \chi_j=0 \quad \text{ and } \quad
\chi_j \text{ is Y-periodic},
\endaligned
\right.
\end{equation}
for $1\le j\le d$ (the index $i$ is summed from $1$ to $d$),
where $Y$ is the elementary cell  for the lattice $\Gamma$.
The homogenized operator $\mathcal{L}_0$ is given by
\begin{equation} \label{homo-op}
\mathcal{L}_0 = -\text{\rm div} ( \widehat{A} \nabla ),
\end{equation}
where, for $\xi \in \R^d$, 
\begin{equation}\label{homo-co}
\langle \widehat{A}\xi, \xi \rangle = \fint_Y \langle A \nabla v_\xi, \nabla v_\xi \rangle
\end{equation}
and $v_\xi (y)  = \langle \xi, y+ \chi (y) \rangle$.
It follows from  \eqref{cell-1} that $\mathcal{L}_1 (y_j + \chi_j ) =0$ in $\R^d$.
Thus, by De Giorgi - Nash estimates,  $\chi_j$ is H\"older continuous.
Furthermore,   if $A$ is H\"older continuous, i.e.,  there exist $\alpha \in (0, 1]$ and $M_\alpha>0$ such that 
\begin{equation}\label{H}
|A(x)-A(y) |\le M_\alpha |x-y|^\alpha \quad \text{ for any } x, y \in \R^d,
\end{equation}
 so is $\nabla \chi_j$.
 
\begin{thm}\label{2d-inv}
Let $d=2$.
Suppose $A$ satisfies \eqref{ellipticity}, \eqref{periodicity} and \eqref{H}.
Then
\begin{equation}\label{det-0}
\text{\rm det} (I +\nabla \chi) (x) \ge \mu
\end{equation}
for any $x\in \R^2$, where $\mu>0$ depends only on $\lambda$,  $\Gamma$ and $(\alpha, M_\alpha)$.
\end{thm}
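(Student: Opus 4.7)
My plan is to use the classical two-dimensional fact that a pair of solutions of a scalar divergence-form elliptic equation, behaving like the identity at infinity, realises a global quasiconformal homeomorphism, so its Jacobian has a fixed sign. The argument breaks into three steps: reduction to pointwise positivity via the regularity already recorded, the positivity itself via 2D univalence, and a compactness upgrade to the uniform lower bound $\mu$.

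\textbf{Step 1 (Reduction).} By the H\"older regularity of $\nabla\chi$ noted just above the theorem (which follows from \eqref{H} and Schauder theory applied to \eqref{cell-1}), the function $\delta(y):=\det(I+\nabla\chi(y))$ is continuous and $\Gamma$-periodic on $\R^2$, so it attains its infimum on the compact cell $Y$. It therefore suffices to prove $\delta(y)>0$ at every $y\in\R^2$ and then pass to a uniform bound by compactness.

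\textbf{Step 2 (Positivity).} Consider the corrector map $\Phi(y)=y+\chi(y)=(\Phi_1(y),\Phi_2(y))$; each component satisfies $\mathcal{L}_1(\Phi_j)=0$ in $\R^2$, so $A\nabla\Phi_j$ is divergence-free. In two dimensions this yields a globally defined stream function $\Psi_j\in C^{1,\alpha}_{\text{loc}}$ with $\nabla\Psi_j=\mathbf{R}\,A\nabla\Phi_j$, where $\mathbf{R}$ is rotation by $\pi/2$. A short calculation shows that $F_j=\Phi_j+i\Psi_j$ satisfies a generalised Beltrami equation and is $K$-quasiregular with $K=K(\lambda)$. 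Since $\chi$ is bounded and periodic, $|\Phi(y)-y|\le C$, and a topological degree computation on a large circle gives $\deg\Phi=1$, so $\Phi$ behaves like the identity at infinity. The Alessandrini--Nesi univalence theorem for elliptic systems of this form (equivalently, Sto\"ilow factorisation of the quasiregular maps $F_j$ combined with the identification at infinity) then forces $\Phi$ to be a global homeomorphism of $\R^2$, and in particular $\delta(y)=\det\nabla\Phi(y)>0$ at every point.

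\textbf{Step 3 (Uniform bound).} The set of matrices $A$ satisfying \eqref{ellipticity}, \eqref{periodicity}, and \eqref{H} with the given parameters is compact in $C(Y)$, and the solution map $A\mapsto\chi$ is continuous into $C^{1,\alpha/2}(Y)$. Consequently the strictly positive continuous functional $A\mapsto\min_Y\delta$ is bounded below by some $\mu=\mu(\lambda,\Gamma,\alpha,M_\alpha)>0$, which is the assertion \eqref{det-0}.

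The principal obstacle is Step 2: the positivity $\delta>0$ is a genuinely two-dimensional phenomenon, false in $d\ge 3$. It rests on the characterisation of critical points of planar divergence-form solutions via quasiregular theory, together with a degree computation at infinity that uses the periodicity of $\chi$ in an essential way. The univalence/quasiregular machinery would be invoked as a black box rather than reproved here.
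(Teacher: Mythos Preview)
Your proposal is correct and follows essentially the same route as the paper: both arguments define the corrector map $\Phi(y)=y+\chi(y)$, use periodicity/boundedness of $\chi$ together with two-dimensional topology to see that $\Phi$ is a global homeomorphism, invoke Alessandrini--Nesi type results to obtain $\det\nabla\Phi>0$ everywhere, and finish with compactness for the uniform lower bound $\mu$. The one structural difference is in how Jacobian positivity is extracted. The paper first establishes that $\Phi$ is a homeomorphism, then pulls back a disc to $\Omega=\Phi^{-1}(B(y_0,r_0))$ and observes that each scalar combination $u_\xi=\langle\Phi,\xi\rangle$ has \emph{unimodal} boundary data on $\partial\Omega$; a classical two-dimensional fact for divergence-form solutions then gives $\nabla u_\xi\neq 0$ inside. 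You instead cite the Alessandrini--Nesi/Rad\'o--Kneser--Choquet theorem for $\sigma$-harmonic pairs directly. One caution on your wording: ``$\Phi$ is a global homeomorphism, and in particular $\det\nabla\Phi>0$'' is misleading, since a $C^1$ homeomorphism can certainly have vanishing Jacobian; the strict positivity genuinely requires the elliptic structure, not the homeomorphism alone. Likewise, your quasiregular framing of the individual components $F_j=\Phi_j+i\Psi_j$ only yields that zeros of $\nabla\Phi_j$ are isolated; ruling them out entirely still needs the pair argument (which is exactly what the paper's unimodal step, or the Alessandrini--Nesi theorem you cite, supplies).
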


\begin{proof}
This theorem was more or less proved in \cite{AN-2001, AN-2018}, although it is not stated explicitly.
Also see related work in \cite{AM-1994, Bauman-2001, AN-2021} and the references therein.
We give an outline of  the proof here.

Step 1. Let $u_i =x_i +\chi_i (x)$ for $i=1, 2$, and $U=(u_1, u_2)$.
Use the continuity and boundedness of $\chi_i$ to show that 
$U: \R^2 \to \R^2$ is onto.

Step 2. Show that $U:  \R^2 \to \R^2$ is one-to-one and $U^{-1}$ is continuous.
As a result,  $U: \R^2 \to \R^2$ is a homeomorphism.  
The smoothness condition \eqref{H} is not needed. See \cite{AN-2001} for details.

Step 3.  Let $\xi\in \R^2$ with $|\xi|=1$. Consider the function $u_\xi = \langle U, \xi \rangle$.
Note that div$(A(x) \nabla u_\xi )=0$ in $\R^2$.
To prove $\text{\rm det} (I+\nabla \chi)>0$, it suffices to show that 
$$
|\nabla u_\xi (x) |>0
$$
for any $x\in \R^2$.
To this end, fix $y_0 \in \R^2$ and $r_0>0$.
Let $\Omega= U^{-1} (B(y_0, r_0))$.
Note that $\text{\rm div} (A\nabla u_\xi)=0$ in $\Omega$, and
$g=u|_{\partial \Omega}$ is unimodal.
This implies $|\nabla u_\xi (x_0)|>0$, where $U(x_0) =y_0$.
See \cite{AN-2018} for details.

Step 4. Use $\text{\rm det} (I+\nabla \chi)>0$ and
a compactness argument to show that $\text{\rm det} (I+\nabla \chi)\ge \mu$, where $\mu>0$ depends only on $\lambda$, $\Gamma$ and $(\alpha, M_\alpha)$.
\end{proof}


\begin{remark} 
  The estimate \eqref{det-0} fails if $d\ge 3$.
See \cite{BMN-2004, C-2015} for a counter-example.
\end{remark}

By a change of variables we may assume that
\begin{equation}\label{h0}
\widehat{A} + (\widehat{A})^T=2I.
\end{equation}
See Remark 2.3 in \cite{Lin-Shen-3d}.
This ensures that solutions of the homogenized equation $\mathcal{L}_0 (u_0)=0$ are harmonic.
The following compactness  theorem will be used in the next section.

\begin{thm}\label{C-thm}
Let $u_j$ be a solution of $\text{\rm div} (A^j (x/\e_j) \nabla u_j)=0$ in $B(0, r_0)$,
where $\e_j \to 0$ and $A^j$ satisfies \eqref{ellipticity}, \eqref{periodicity},  \eqref{H} and \eqref{h0}.
Suppose that $\{ u_j \}$  is bounded in $L^2(B(0, r_0))$.
Then there exists a subsequence, still denoted by $\{u_j\}$ and a harmonic function $u_0$ in $B(0, r_0)$,
such that $u_j \to u_0$ weakly in $L^2(B(0, r_0))$ and weakly in $H^1(B(0, r))$ for any $0< r< r_0$.
Moreover,
\begin{equation}\label{co-3}
\| u_j -u_0\|_{L^\infty(B(0, r))} \to 0,
\end{equation}
\begin{equation}\label{co-4}
\| \nabla u_j - (I +\nabla \chi^j (x/\e_j))\nabla u_0\|_{L^\infty(B(0, r))}
\to 0,
\end{equation}
for any $0< r< r_0$, where $\chi^j$ denotes the first-order correctors for the matrix  $A^j$.
\end{thm}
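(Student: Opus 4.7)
The plan is to combine weak compactness of $\{u_j\}$ with the uniform regularity theory of periodic homogenization. By hypothesis $\{u_j\}$ is bounded in $L^2(B(0,r_0))$; Caccioppoli's inequality then yields a uniform $H^1(B(0,r))$ bound and De Giorgi--Nash yields a uniform $C^{0,\beta}(B(0,r))$ bound for every $r < r_0$ (both with constants depending only on $\lambda$). Arzel\`a--Ascoli and weak compactness give a subsequence with $u_j \to u_0$ uniformly on compact subsets of $B(0,r_0)$ and $u_j \rightharpoonup u_0$ weakly in $L^2(B(0,r_0))$ and in $H^1(B(0,r))$; this already yields \eqref{co-3}. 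Using the (implicit) uniformity of the H\"older data $(\alpha, M_\alpha)$ of the $A^j$, a diagonal subsequence further gives $A^j \to A^*$ in $C^\alpha(Y)$, and Schauder estimates applied to the cell problem \eqref{cell-1} produce $\chi^j \to \chi^*$ in $C^{1,\alpha}(Y)$ with $\widehat{A^j} \to \widehat{A^*}$; the symmetric part of $\widehat{A^*}$ is $I$ by \eqref{h0}.

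To identify $u_0$, I would apply Tartar's method of oscillating test functions (equivalently, H-convergence, adapted to the varying matrices $A^j$). The fluxes $F_j = A^j(x/\e_j)\nabla u_j$ are divergence-free and $L^2_{\text{loc}}$-bounded; testing $\text{div}(F_j)=0$ against
$$
\phi_{j,k}(x) = \big(x_k + \e_j \chi^{*,j}_k(x/\e_j)\big)\psi(x),
$$
where $\chi^{*,j}$ is the adjoint corrector for $(A^j)^T$ (also converging in $C^{1,\alpha}$) and $\psi \in C_c^\infty(B(0,r_0))$, and passing to the limit using strong $L^2$ convergence of $u_j$ together with the weak convergence of periodic oscillating products to their cell averages, one obtains $F_j \rightharpoonup \widehat{A^*}\nabla u_0$. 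Hence $\text{div}(\widehat{A^*}\nabla u_0)=0$, which by \eqref{h0} reduces to $\Delta u_0 = 0$, so $u_0$ is harmonic.

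The substantive step is \eqref{co-4}. The key tool is the Avellaneda--Lin uniform interior Lipschitz estimate,
$$
\|\nabla u_j\|_{L^\infty(B(0,r))} \le C(r,r_0)\,\|u_j\|_{L^2(B(0,r_0))},
$$
with $C(r,r_0)$ independent of $j$; this is where uniform ellipticity, periodicity, and H\"older continuity of the $A^j$ are essential. I would then introduce the two-scale corrector approximation
$$
w_j(x) = u_0(x) + \e_j \chi^j_k(x/\e_j)\,\partial_k u_0(x)\,\eta(x),
$$
with $\eta$ a smooth cutoff equal to $1$ on $B(0,r)$ and supported slightly inside $B(0,r_0)$. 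Computing $\mathcal{L}_{\e_j}(u_j-w_j)$ and using interior smoothness of the harmonic function $u_0$ gives, via the standard energy estimate, $\|u_j - w_j\|_{H^1(B(0,r'))}= o(1)$ on a slightly smaller ball $B(0,r')$. Combining this error bound with the uniform Lipschitz estimate and the $C^{1,\alpha}$ convergence $\chi^j \to \chi^*$ produces \eqref{co-4}. The main obstacle is precisely this last upgrade from weak (or $L^2$) gradient convergence to the pointwise, oscillation-aware $L^\infty$ statement \eqref{co-4}: qualitative H-convergence does not suffice, and one must carry the Avellaneda--Lin machinery through with constants uniform in $j$.
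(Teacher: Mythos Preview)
The paper does not prove this theorem; its entire proof reads ``See Theorem~2.7 and Remark~2.8 in \cite{Lin-Shen-3d}.'' Your outline is the standard route to that cited result (Caccioppoli and De~Giorgi--Nash for compactness and \eqref{co-3}, Tartar/H-convergence to identify the harmonic limit, and Avellaneda--Lin uniform Lipschitz regularity for \eqref{co-4}), so in substance you are reproducing what the reference does.

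One clarification on your last step: the passage from the $H^1$ smallness of $u_j-w_j$ to \eqref{co-4} is not obtained by ``combining'' the energy bound with the homogeneous Lipschitz estimate, but by applying the uniform interior Lipschitz estimate for the \emph{inhomogeneous} operator directly to $v_j=u_j-w_j$. Using the flux-corrector identity and the interior smoothness of the harmonic $u_0$, one has $\mathcal{L}_{\e_j}(v_j)=\text{div}(F_j)$ with $\|F_j\|_{L^\infty}=O(\e_j)$, and then
\[
\|\nabla v_j\|_{L^\infty(B(0,r))}\le C\big(\|v_j\|_{L^2}+\|F_j\|_{L^\infty}\big)\to 0,
\]
from which \eqref{co-4} follows after dropping the harmless term $\e_j\chi^j(x/\e_j)\nabla^2 u_0$. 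Also, the $C^{1,\alpha}$ convergence $\chi^j\to\chi^*$ you extract is not actually needed for \eqref{co-4}: the conclusion is stated with $\chi^j$ itself, so only the uniform bound $\|\nabla\chi^j\|_{L^\infty}\le C$ enters.
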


\begin{proof}
See Theorem 2.7 and Remark 2.8 in  \cite{Lin-Shen-3d}.
\end{proof}


\section{Doubling indices  and critical sets}

Let $d\ge 2$.
As in \cite{Lin-Shen-3d},
we introduce a doubling index for a continuous function $u$ on a ball $B(x_0, r)$, defined by 
\begin{equation}\label{d-0}
\wN (u, x_0, r)
=\log_ 4 \frac{\fint_{\partial B(x_0, r)} (u-u(x_0))^2 }{\fint_{\partial B(x_0, r/2)} (u-u(x_0))^2},
\end{equation}
assuming $\| u-u(x_0)\|_{L^2(\partial B(x_0, t))} \neq 0$ for $0< t\le r$.
Define
\begin{equation} \label{M-2}
\mathcal{M}(\lambda, \Gamma, M)
=\Big\{ A=A(y): \  A \text{ satisfies \eqref{ellipticity}, \eqref{periodicity}, \eqref{smoothness}, and } \eqref{h0}  \Big\}.
\end{equation}

\begin{thm}\label{d1-thm}
Let  $L\ge 2$ and $\delta_0\in (0, 1/2]$.
Assume that $A\in \mathcal{M}(\lambda, \Gamma, M)$.
There exists $\e_0=\e_0 (L, \delta_0)>0$ such that  if $0< \e< \e_0r $ and $u_\e \in H^1(B(x_0, r))$ is a non-constant solution of
$\text{\rm div}(A(x/\e) \nabla u_\e)=0$ in $B(x_0, r)$ for some $r>0$ and $x_0 \in \R^d$, with the properties that,
\begin{equation}\label{d-1}
\wN(u_\e, x_0, r)\le  L+1 \quad \text{ and } \quad
\wN (u_\e, x_0, r/2) \le \ell +\delta_0,
\end{equation}
where $\ell \in \mathbb{N}$ and $1\le \ell \le L$,
 then 
 \begin{equation}\label{d-2}
 \wN(u_\e, x_0, r/4)\le \ell +\delta_0.
 \end{equation}
 If, in addition,  $2^J \e < \e_0 r $ for some integer  $J\ge 0$, then 
 \begin{equation}\label{d-3}
 \wN(u_\e, x_0, r/2^j) \le \ell +\delta_0
\quad \text{  for  } j=2, \dots, J+2.
\end{equation}
\end{thm}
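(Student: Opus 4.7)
My plan is a compactness-and-contradiction argument, whose critical mechanism is that for a non-zero harmonic function $u_0$ with $u_0(0)=0$ the map $r \mapsto \wN(u_0, 0, r)$ is nondecreasing and is strictly increasing unless $u_0$ is a single homogeneous harmonic polynomial (in which case $\wN$ takes an integer value).

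After rescaling to $x_0 = 0$, $r = 1$ and subtracting $u_\e(x_0)$ so that $u_\e(0)=0$, suppose \eqref{d-2} fails. Then I extract sequences $\e_j \to 0$, matrices $A^j \in \mathcal{M}(\lambda, \Gamma, M)$, and solutions $u_j$ of $\text{\rm div}(A^j(x/\e_j) \nabla u_j) = 0$ in $B(0, 1)$ with $u_j(0) = 0$, $\wN(u_j, 0, 1) \le L+1$, $\wN(u_j, 0, 1/2) \le \ell + \delta_0$, but $\wN(u_j, 0, 1/4) > \ell + \delta_0$. Normalizing $\fint_{\partial B(0, 1/2)} u_j^2 = 1$ and combining with \eqref{d-1} yields a uniformly bounded family in $L^2(B(0, 1))$ (after a mild enlargement of the working domain if needed). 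Theorem \ref{C-thm} produces a subsequence converging in $L^\infty$ on compact subsets of $B(0,1)$ to a harmonic function $u_0$ with $u_0(0) = 0$ and $\fint_{\partial B(0, 1/2)} u_0^2 = 1$, hence $u_0 \not\equiv 0$.

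Uniform convergence on compact sets passes all three doubling index inequalities to $u_0$, giving $\wN(u_0, 0, 1) \le L+1$, $\wN(u_0, 0, 1/2) \le \ell + \delta_0$, and $\wN(u_0, 0, 1/4) \ge \ell + \delta_0$. Expanding $u_0 = \sum_{k \ge 1} h_k$ in solid spherical harmonics gives $\fint_{\partial B(0, r)} u_0^2 = \sum_k a_k r^{2k}$ with $a_k \ge 0$; a short calculation shows $\log_4 \sum_k a_k r^{2k}$ is convex in $\log r$, equivalently that the weighted mean $\sum k a_k r^{2k}/\sum a_k r^{2k}$ is increasing in $r$. Hence $\wN(u_0, 0, r)$ is monotone nondecreasing in $r$ and constant on an interval only when a single $a_k$ is nonzero. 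Together with the three inequalities above this forces $\wN(u_0, 0, 1/4) = \wN(u_0, 0, 1/2) = \ell + \delta_0$ and identifies $u_0$ as a single homogeneous harmonic polynomial of degree $k \in \mathbb{N}$ with $k = \ell + \delta_0$. Since $\delta_0 \in (0, 1/2]$, the value $\ell + \delta_0$ is not an integer — a contradiction, establishing \eqref{d-2}.

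For the iterated bound \eqref{d-3}, induct on $j$: at step $k$ apply \eqref{d-2} at scale $r/2^{k-1}$. The two input bounds are inherited from the previous step (the first using $\ell + \delta_0 \le L+1$, the second being the previous conclusion), and the smallness condition $\e < \e_0 \cdot r/2^{k-1}$ holds for $k - 1 \le J$ by the hypothesis $2^J \e < \e_0 r$. The main obstacle I anticipate is the rigorous passage of the doubling indices to the limit — in particular at the outermost scale $r=1$, which sits at the boundary of the compactness statement of Theorem \ref{C-thm}; this is dealt with by either a slight enlargement of the domain or by running the whole argument at scales just inside $r=1$. The conceptual heart of the proof is the rigidity statement that a harmonic function on a ball centered at a zero cannot have doubling index exactly $\ell + \delta_0$ when $\delta_0 \in (0, 1/2]$, which in turn follows from the orthogonality of spherical harmonics and the strict convexity of $\log f(r)$ in $\log r$.
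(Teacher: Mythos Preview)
Your argument is correct and is essentially the same compactness--contradiction proof that the paper imports by citation from \cite{Lin-Shen-3d}; it is also the template used verbatim in the proof of Lemma~\ref{2-d-lemma} here. One clarification on the obstacle you flag: the bound $\wN(u_0,0,1)\le L+1$ is not needed for the contradiction, since monotonicity together with $\wN(u_0,0,1/2)\le \ell+\delta_0$ and $\wN(u_0,0,1/4)\ge \ell+\delta_0$ already pins both values to $\ell+\delta_0$; the only role of the scale-$1$ hypothesis is to bound $\fint_{\partial B(0,1)}u_j^2$ and hence secure the $L^2(B(0,1))$ compactness, exactly as the paper does in Lemma~\ref{2-d-lemma}, so no enlargement of the domain is necessary.
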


\begin{proof}
This is proved in \cite[Theorem 3.1]{Lin-Shen-3d}.
\end{proof}

\begin{thm}\label{fq-thm}
Let $L\ge 2$ and $\delta_1 \in (0, 1/2]$.
Assume that $A\in \mathcal{M}(\lambda, \Gamma, M)$.
There exists $\e_1=\e_1 (L, \delta_1)>0$ such that if
$0< \e< \e_1r $,
$u_\e\in H^1(B(x_0, r))$,
$\text{\rm div}(A(x/\e)  \nabla u_\e)=0$ in $B(x_0, r)$ for some $x_0\in \R^d$ and $r>0$,
\begin{equation}\label{d-20}
\wN(u_\e, x_0, r)\le L+1   \quad \text{ and } \quad
\wN(u_\e, x_0, r/2) \le \ell -\delta_1,
\end{equation}
where $\ell \in \mathbb{N}$ and $1\le \ell \le L$, 
then
\begin{equation}
\wN (u_\e, x_0, \delta_1 r/(8\ell) ) \le \ell-1 +\delta_1.
\end{equation}
\end{thm}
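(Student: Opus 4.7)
The plan is a compactness/contradiction argument, reducing to an analysis of the Almgren frequency of a limiting harmonic function.

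Suppose the conclusion fails for some fixed $L$ and $\delta_1$. Then we obtain sequences $A^j \in \mathcal{M}(\lambda, \Gamma, M)$, radii $r_j$, and $\e_j$ with $\e_j/r_j \to 0$, together with solutions $u_j$ of $\mathrm{div}(A^j(x/\e_j)\nabla u_j) = 0$ in $B(x_0^j, r_j)$ satisfying the hypotheses but violating the conclusion. After translating, rescaling $(x_0^j, r_j) \mapsto (0, 1)$, subtracting $u_j(0)$, and normalizing $\fint_{\partial B(0, 1/2)} u_j^2 = 1$, the hypothesis $\wN(u_j, 0, 1) \le L+1$ gives $\fint_{\partial B(0, 1)} u_j^2 \le 4^{L+1}$, so $\{u_j\}$ is uniformly bounded in $L^2$. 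Theorem \ref{C-thm} then yields a subsequence converging uniformly on compact subsets of $B(0, 1)$ to a harmonic function $u_0$ with $u_0(0) = 0$ and $\fint_{\partial B(0, 1/2)} u_0^2 = 1$. Uniform convergence on each $\partial B(0, t) \subset B(0, 1)$ transfers the relevant doubling-index bounds to the limit:
\begin{equation*}
\wN(u_0, 0, 1/2) \le \ell - \delta_1, \qquad \wN(u_0, 0, \delta_1/(8\ell)) \ge \ell - 1 + \delta_1.
\end{equation*}

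Now I expand $u_0 = \sum_{k \ge 1} P_k$ in homogeneous harmonic polynomials and set $\phi(t) := \fint_{\partial B(0, t)} u_0^2 = \sum_k c_k t^{2k}$ with $c_k \ge 0$, $\ell_0 := \min\{k : c_k > 0\}$. The Almgren frequency $N_F(t) = \sum_k k c_k t^{2k}/\sum_k c_k t^{2k}$ is non-decreasing in $t$, and the mean-value identity $\wN(u_0, 0, t) = N_F(\xi)$ for some $\xi \in (t/2, t)$ gives $\wN(u_0, 0, t) \in [N_F(t/2), N_F(t)]$; in particular $\wN(u_0, 0, t)$ is non-decreasing and tends to $\ell_0$ as $t \to 0^+$. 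Since $\wN(u_0, 0, 1/2) \le \ell - \delta_1 < \ell$, the lower bound $\wN(u_0, 0, t) \ge \ell_0$ forces $\ell_0 \le \ell - 1$.

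The heart of the proof is the quantitative estimate $\wN(u_0, 0, \delta_1/(8\ell)) < \ell - 1 + \delta_1$, which contradicts the limit inequality. Since $\wN(u_0, 0, t) \le N_F(t)$, it suffices to bound $N_F(\delta_1/(8\ell))$. The extremal configuration is $\ell_0 = \ell - 1$ with exactly two active degrees $c_{\ell-1}, c_\ell > 0$: there
\[
N_F(t) = (\ell - 1) + \frac{c_\ell t^2}{c_{\ell - 1} + c_\ell t^2},
\]
the hypothesis $N_F(1/4) \le \ell - \delta_1$ forces $c_{\ell - 1}/c_\ell \ge \delta_1/[16(1 - \delta_1)]$, and direct substitution yields $N_F(\delta_1/(8\ell)) < \ell - 1 + \delta_1$ precisely when $4\ell^2 > (1 - \delta_1)^2$, which holds for all $\ell \ge 1$ and $\delta_1 \in (0, 1/2]$. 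The general case is handled by extracting the tail bounds $c_k/c_{\ell_0} \lesssim 16^{k - \ell_0}$ from $N_F(1/4) \le \ell - \delta_1$ and summing geometrically at $t = \delta_1/(8\ell)$, which yields $N_F(t) - \ell_0 \lesssim \delta_1^2/\ell^2 < (\ell - 1 - \ell_0) + \delta_1$ with ample slack.

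The main obstacle is this quantitative algebraic step — verifying that the explicit constant $\delta_1/(8\ell)$ is precisely tuned to force the strict inequality in the extremal two-term configuration, with the general case following by geometric summation once the nondegeneracy of $c_{\ell_0}$ is extracted. Once established, the strict inequality contradicts the limiting $\ge$ bound, and the theorem follows.
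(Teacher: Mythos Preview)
The paper does not prove this theorem here; it simply cites \cite[Theorem~3.4]{Lin-Shen-3d}. Your compactness--contradiction reduction to a harmonic limit $u_0$ is sound and is exactly the kind of argument one expects: the normalization, the use of Theorem~\ref{C-thm}, and the transfer of the two doubling--index inequalities to $u_0$ are all correct. The subsequent use of the monotone Almgren frequency $N_F$ and the identification $\wN(u_0,0,t)\in[N_F(t/2),N_F(t)]$ is also fine, as is your two--term extremal computation.

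The genuine gap is in your treatment of the general (multi--term) case. The claimed tail bound $c_k/c_{\ell_0}\lesssim 16^{k-\ell_0}$ does \emph{not} follow from $N_F(1/4)\le \ell-\delta_1$. The constraint $\sum_k (k-\ell+\delta_1)c_k 16^{-k}\le 0$ is an aggregate moment inequality; it controls the total mass at degrees $\ge \ell$ in terms of the total mass at degrees $<\ell$, but it gives no control of $c_k$ relative to the specific coefficient $c_{\ell_0}$. A concrete counterexample: take $\ell=3$, $\delta_1=1/10$, $c_1=10^{-100}$, $c_2=1$, all other $c_k=0$. Then $\ell_0=1$, $N_F(1/4)\approx 2<2.9$, yet $c_2/c_1=10^{100}$, destroying your tail estimate. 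Consequently the line ``$N_F(t)-\ell_0\lesssim \delta_1^2/\ell^2$'' is unjustified.

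What is actually needed is an argument that the two--term configuration on $\{\ell-1,\ell\}$ is extremal for the problem ``maximize $N_F(\delta_1/(8\ell))$ subject to $N_F(1/4)\le \ell-\delta_1$'', or an alternative direct estimate that does not route through $c_{\ell_0}$. One clean route: set $a_k=c_k 16^{-k}$ and $\tau=\delta_1/(2\ell)$, rewrite the target as $\sum_k(k-\ell+1-\delta_1)a_k\tau^{2k}<0$, and bound the positive part ($k\ge\ell$) using the hypothesis $\sum_{k\ge\ell}(k-\ell+\delta_1)a_k\le \sum_{k<\ell}(\ell-\delta_1-k)a_k$ together with $\sup_{m\ge 0}\tfrac{(m+1-\delta_1)\tau^{2m}}{m+\delta_1}=(1-\delta_1)/\delta_1$; this yields the desired strict inequality whenever $1-\delta_1<2\ell$, which always holds. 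Your sketch has the right architecture but this quantitative step must replace the incorrect tail--bound claim.
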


\begin{proof}
This is proved in \cite[Theorem 3.4]{Lin-Shen-3d}.
\end{proof}

Define
\begin{equation}\label{A}
\mathcal{A}(\lambda, \Gamma, M, \mu)
=\Big\{ A=A(y): A \text{ satisfies \eqref{ellipticity}, \eqref{periodicity}, \eqref{smoothness},  \eqref{inv-0}   and }
\eqref{h0}   \Big\}.
\end{equation}

\begin{thm}\label{low-L}
Let $L\ge 2$ and  $A\in \mathcal{A} (\lambda, \Gamma, M, \mu)$.
There exists $\e_0=\e_0(L)>0$ such that if 
 $u_\e\in H^1(B(0, 1))$ is  a non-constant solution of $\mathcal{L}_\e(u_\e)=0$ in $B(0, 1)$ for some $0< \e< \e_0$, $\wN(u_\e, 0, 1) \le L$, and
\begin{equation}
\wN(u_\e, 0, 1/2)\le 3/2,
\end{equation}
then $|\nabla u_\e (0)|\neq 0$.
\end{thm}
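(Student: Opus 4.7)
The plan is to argue by contradiction via the compactness theorem. Suppose the conclusion fails; then there exist sequences $\varepsilon_j \to 0$, matrices $A^j \in \mathcal{A}(\lambda, \Gamma, M, \mu)$, and non-constant solutions $u_j$ of $\text{\rm div}(A^j(x/\varepsilon_j) \nabla u_j) = 0$ in $B(0,1)$ satisfying $\wN(u_j, 0, 1) \le L$, $\wN(u_j, 0, 1/2) \le 3/2$, and $\nabla u_j(0) = 0$. Since the doubling index, the equation, and the value of $\nabla u_j$ at the origin are all unaffected by the substitution $u_j \mapsto c_j(u_j - u_j(0))$, I may normalize so that $u_j(0)=0$ and $\fint_{\partial B(0,1/2)} u_j^2 = 1$; combined with the hypothesis $\wN(u_j, 0, 1) \le L$ and standard interior estimates, this yields a uniform $L^2$ bound on $u_j$ on a neighborhood of $B(0, 3/4)$.

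By Theorem \ref{C-thm}, along a subsequence, $u_j \to u_0$ uniformly on $\overline{B(0, 3/4)}$, with $u_0$ harmonic and $u_0(0) = 0$. Passing to the limit in the spherical average at radius $1/2$ gives $\fint_{\partial B(0, 1/2)} u_0^2 = 1$, so $u_0 \not\equiv 0$. Since a non-zero harmonic function cannot vanish identically on any sphere, the average at radius $1/4$ is also positive in the limit, and therefore $\wN(u_j, 0, 1/2) \to \wN(u_0, 0, 1/2)$, yielding $\wN(u_0, 0, 1/2) \le 3/2$. Expanding $u_0 = \sum_{k \ge 1} P_k$ in homogeneous harmonic polynomials near the origin and letting $k_0$ be the smallest degree with $P_{k_0} \not\equiv 0$, the classical monotonicity of Almgren's frequency (equivalently, the doubling index) for harmonic functions gives $k_0 \le \wN(u_0, 0, 1/2) \le 3/2$. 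Hence $k_0 = 1$, so $\nabla u_0(0) = \nabla P_1 \ne 0$.

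To close the argument, I apply \eqref{co-4} of Theorem \ref{C-thm}: $\nabla u_j(\cdot) - (I + \nabla \chi^j(\cdot/\varepsilon_j)) \nabla u_0(\cdot) \to 0$ uniformly on $\overline{B(0, 3/4)}$. Evaluating at $x = 0$, and passing to a further subsequence with $\nabla \chi^j(0) \to M_*$ (using the uniform $L^\infty$-bound on the correctors, which holds across the class $\mathcal{A}$ by the Lipschitz hypothesis \eqref{smoothness}), I obtain $\nabla u_j(0) \to (I + M_*) \nabla u_0(0)$. By \eqref{inv-0}, $\det(I + \nabla \chi^j(0)) \ge \mu$, so in the limit $\det(I + M_*) \ge \mu > 0$, forcing $(I + M_*) \nabla u_0(0) \ne 0$. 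This contradicts $\nabla u_j(0) = 0$, completing the contradiction.

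The main obstacles in this scheme are ensuring that the normalization keeps the limit $u_0$ non-trivial so that $\wN(u_0, 0, 1/2)$ is well-defined and inherits the upper bound $3/2$, and the pointwise identification of the limit of $\nabla u_j(0)$ through \eqref{co-4}. The decisive input for the latter is the non-degeneracy condition \eqref{inv-0}, whose validity in two dimensions under \eqref{smoothness} is precisely the content of Theorem \ref{2d-inv}.
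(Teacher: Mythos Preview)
Your proof is correct. The paper does not actually prove this theorem here; it simply cites \cite[Theorem~3.5]{Lin-Shen-3d}. Your compactness-plus-contradiction argument is precisely the natural route and mirrors the proof the paper gives for the closely related Lemma~\ref{2-d-lemma}: normalize, pass to a harmonic limit $u_0$ via Theorem~\ref{C-thm}, use monotonicity of the doubling index to force the leading homogeneous part of $u_0$ to be linear, and then invoke \eqref{co-4} together with the non-degeneracy \eqref{inv-0} to contradict $\nabla u_j(0)=0$. One small remark: your final sentence linking \eqref{inv-0} to Theorem~\ref{2d-inv} is a side comment rather than a step in the proof, since in the statement of Theorem~\ref{low-L} the condition \eqref{inv-0} is assumed directly via membership in $\mathcal{A}(\lambda,\Gamma,M,\mu)$ and no restriction to $d=2$ is needed.
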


\begin{proof}
This is proved in \cite[Theorem 3.5]{Lin-Shen-3d}.
\end{proof}

Fix $L\ge 1$, $\e>0$, $r>0$, and $x_0\in  \R^d$.  Define
 \begin{equation}\label{F-0}
 \aligned
 \mathcal{F} (L, \e, r, x_0)
 =\Bigg \{
&   u\in H^1(  B(x_0, 2r)):   \ \  u \text{ is not constant, }  u(x_0)=0, \\
 & \text{\rm div} (A(x/\e) \nabla u) =0 \text{ in } B(x_0, 2r) \text{ for some } A \in \mathcal{A}(\lambda, \Gamma, M, \mu),\\ 
& \text{and }  \wN(u, x, r) \le 2L, \  \wN(u, x, r/2) \le L  \   \text{ for all } x\in B(x_0,  r/2) \Bigg\},
 \endaligned
 \end{equation}
 and
 \begin{equation}\label{E-0}
 \aligned
&  \mathcal{E}(L, \e, r) 
 =\\
 & \sup \left\{ \frac{\mathcal{H}^{d-2} (\mathcal{C}(u)\cap B(x_0, r/4))}{r^{d-2}}:\  u \in \mathcal{F} (L, \e, r, x_0)
 \ \ \text{ for some }  x_0\in \R^d \right\},
 \endaligned
 \end{equation}
 where $\mathcal{C}(u)$ denotes the critical set of $u$,
 $$
 \mathcal{C}(u) =\big\{ x: \ |\nabla u(x)|=0 \big\}.
 $$
 Since $u\in \mathcal{F} (L, \e, r, x_0) $ implies $u(\cdot +x_0) \in \mathcal{F}(L, \e, r, 0)$,
 it follows  that 
 \begin{equation}\label{E-00}
   \mathcal{E}(L, \e, r) 
 =
  \sup \left\{ \frac{\mathcal{H}^{d-2} (\mathcal{C} (u)\cap B(0,  r /4))}{r^{d-2}}:\ \   u \in \mathcal{F} (L, \e, r, 0)
 \right\}.
 \end{equation}
 By a simple covering argument, it is not hard to see that if $0< r_1\le  r_2/2$, then
 \begin{equation}\label{E-0a}
 \mathcal{E}(L, \e, r_2) \le C \left(\frac{r_2}{r_1}\right)^2
 \mathcal{E}(L, \e, r_1),
 \end{equation}
 where $C$ depends only on $d$.

  \begin{lemma}\label{E-lemma-1}
 For any $\theta>0$, 
 \begin{equation}\label{E-01}
 \mathcal{E} (L, \e, r) =\mathcal{E} (L, \theta^{-1} \e, \theta^{-1} r).
 \end{equation}
 \end{lemma}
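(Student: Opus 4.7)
The plan is to prove \eqref{E-01} by exhibiting an explicit bijection between $\mathcal{F}(L,\e,r,0)$ and $\mathcal{F}(L,\theta^{-1}\e,\theta^{-1}r,0)$ that preserves the normalized critical-set density, and then passing to the supremum. Everything reduces to the dilation $v(x)=u(\theta x)$, which is the natural scaling for homogenization when we rescale both the period and the radius by the same factor.

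Concretely, I would take an arbitrary $u\in\mathcal{F}(L,\e,r,0)$ with associated coefficient matrix $A\in\mathcal{A}(\lambda,\Gamma,M,\mu)$, and set $v(x)=u(\theta x)$ for $x\in B(0,2\theta^{-1}r)$. A direct chain-rule computation shows $\nabla v(x)=\theta(\nabla u)(\theta x)$ and
\[
\mathrm{div}\bigl(A(x/(\theta^{-1}\e))\nabla v\bigr)(x)=\theta^{2}\,\mathrm{div}\bigl(A(y/\e)\nabla u\bigr)(\theta x)=0,
\]
so $v$ solves the rescaled equation on $B(0,2\theta^{-1}r)$ with the same matrix $A$, which still belongs to $\mathcal{A}(\lambda,\Gamma,M,\mu)$ since the class is defined independently of $\e$. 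Clearly $v(0)=u(0)=0$, and $v$ is non-constant.

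Next I would verify the doubling-index hypotheses transfer. Changing variables on spheres,
\[
\fint_{\partial B(x_0/\theta,\,\rho/\theta)}\bigl(v-v(x_0/\theta)\bigr)^{2}
=\fint_{\partial B(x_0,\rho)}\bigl(u-u(x_0)\bigr)^{2},
\]
so $\wN(v,x_0/\theta,\rho/\theta)=\wN(u,x_0,\rho)$ for all admissible $\rho$. Consequently the bounds $\wN(u,x,r)\le 2L$ and $\wN(u,x,r/2)\le L$ on $B(0,r/2)$ become $\wN(v,\tilde x,\theta^{-1}r)\le 2L$ and $\wN(v,\tilde x,\theta^{-1}r/2)\le L$ on $B(0,\theta^{-1}r/2)$, so $v\in\mathcal{F}(L,\theta^{-1}\e,\theta^{-1}r,0)$. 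The map $u\mapsto v$ is clearly an involution (up to swapping $\theta\leftrightarrow\theta^{-1}$), hence a bijection.

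Finally, since $\nabla v(x)=\theta(\nabla u)(\theta x)$ we have $\mathcal{C}(v)=\theta^{-1}\mathcal{C}(u)$, and by the scaling law of Hausdorff measure
\[
\frac{\mathcal{H}^{d-2}\bigl(\mathcal{C}(v)\cap B(0,(\theta^{-1}r)/4)\bigr)}{(\theta^{-1}r)^{d-2}}
=\frac{\theta^{-(d-2)}\,\mathcal{H}^{d-2}\bigl(\mathcal{C}(u)\cap B(0,r/4)\bigr)}{\theta^{-(d-2)}r^{d-2}}
=\frac{\mathcal{H}^{d-2}\bigl(\mathcal{C}(u)\cap B(0,r/4))}{r^{d-2}}.
\]
Taking the supremum over all admissible $u$ and invoking \eqref{E-00} on both sides yields \eqref{E-01}. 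There is no genuine obstacle here: the lemma is purely a bookkeeping check that all three ingredients of the definition of $\mathcal{E}$—the PDE, the doubling conditions, and the normalized critical-set measure—scale consistently under $x\mapsto\theta x$. The only point worth being explicit about is that the coefficient matrix itself is not rescaled (its lattice $\Gamma$ is preserved), so the class $\mathcal{A}(\lambda,\Gamma,M,\mu)$ is invariant under the procedure.
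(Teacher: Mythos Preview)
Your proof is correct and follows exactly the paper's approach: the paper's proof is the one-line observation that $v(x)=u(\theta x)$ sends $\mathcal{F}(L,\e,r,0)$ to $\mathcal{F}(L,\theta^{-1}\e,\theta^{-1}r,0)$ with $\mathcal{H}^{d-2}(\mathcal{C}(u)\cap B(0,r/4))=\theta^{d-2}\mathcal{H}^{d-2}(\mathcal{C}(v)\cap B(0,\theta^{-1}r/4))$, and you have simply spelled out the verification of each ingredient in detail.
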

 
 \begin{proof}
 This follows from the observation that if
  $u\in \mathcal{F} (L, \e, r, 0)$ and
 $
 v(x) =u (\theta x),
 $
 then $v\in \mathcal{F} (L, \theta^{-1} \e, \theta^{-1} r, 0)$ and
 $$
 \mathcal{H}^{d-2} ( \mathcal{C} (u) \cap B(0,  r/4))= \theta^{d-2} \mathcal{H}^{d-2} (\mathcal{C} (v) \cap B(0,  \theta^{-1} r/4)).
 $$
 \end{proof}

 \begin{thm}\label{L-lemma}
If $0< r\le \e_0^{-1}  \e$ for some $\e_0>0$, then
\begin{equation}\label{E-03}
\mathcal{E} (L, \e, r) \le C(L, \e_0),
\end{equation}
where $C(L,  \e_0)$ depends on $\e_0$ and $L$.
 \end{thm}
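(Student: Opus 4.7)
The plan is to use the scaling invariance of $\mathcal{E}$ from Lemma \ref{E-lemma-1} to collapse the small-scale regime $r \lesssim \e$ to a fixed unit scale, on which the rescaled operator becomes a single divergence-form elliptic equation with \emph{uniformly} Lipschitz coefficients; classical critical-set estimates then apply.

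First, applying Lemma \ref{E-lemma-1} with $\theta = r$ gives $\mathcal{E}(L, \e, r) = \mathcal{E}(L, \e/r, 1)$, and the hypothesis $r \le \e_0^{-1}\e$ becomes $\e' := \e/r \ge \e_0$. So it suffices to bound $\mathcal{E}(L, \e', 1)$ uniformly for $\e' \ge \e_0$. For any $u \in \mathcal{F}(L, \e', 1, 0)$, the equation is $\text{div}(B\nabla u) = 0$ on $B(0, 2)$ with $B(x) := A(x/\e')$ satisfying the ellipticity \eqref{ellipticity} with constant $\lambda$ and the Lipschitz bound
\[
|B(x) - B(y)| \le (M/\e')\,|x-y| \le (M/\e_0)\,|x - y|,
\]
uniformly in $A \in \mathcal{A}(\lambda, \Gamma, M, \mu)$ and $\e' \ge \e_0$. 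The solution $u$ is non-constant with $u(0) = 0$, and the inclusion $u \in \mathcal{F}(L, \e', 1, 0)$ gives $\wN(u, x, 1) \le 2L$ and $\wN(u, x, 1/2) \le L$ for every $x \in B(0, 1/2)$.

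At this point $u$ solves a \emph{single} divergence-form elliptic equation whose coefficient lies in a fixed Lipschitz class (depending only on $d$, $\lambda$ and $M/\e_0$), with uniformly bounded doubling index. Classical $(d-2)$-Hausdorff measure estimates for critical sets of such solutions — see, e.g., \cite{Han-Lin-2, HHN-1999} for finiteness and \cite{Naber-2017} for effective quantitative bounds — then yield
\[
\mathcal{H}^{d-2}\bigl(\mathcal{C}(u) \cap B(0, 1/4)\bigr) \le C,
\]
with $C$ depending only on $d$, $\lambda$, $M/\e_0$, and $L$. Taking the supremum over $u$ gives $\mathcal{E}(L, \e', 1) \le C(L, \e_0)$, proving the theorem.

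The main point to check is that the constant in the classical bound depends only on the Lipschitz seminorm of the coefficients (uniformly $\le M/\e_0$), the ellipticity constant $\lambda$, and the doubling-index data $L$ on the top two scales — but \emph{not} on any finer structure of $A$ at scale $\e'$. This is built into the classical proofs via Almgren-type frequency monotonicity for Lipschitz coefficients and a quantitative covering/stratification scheme, applied uniformly to the class of all admissible $B$. A self-contained alternative, if the precise form of the classical bound is inconvenient, is a compactness-contradiction: along any sequence $\{u_j\}$ violating the claim, uniform Schauder-type interior estimates and the uniform Lipschitz bound on $B^j$ yield a subsequence $u_j \to u_*$ in $C^{1,\alpha}_{loc}$, where $u_*$ solves a single Lipschitz equation, is non-constant (from the lower doubling-index bound), and has bounded doubling index; the classical finiteness of $\mathcal{H}^{d-2}(\mathcal{C}(u_*) \cap B(0, 1/4))$ then contradicts the assumed divergence.
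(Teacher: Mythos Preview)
Your main argument is correct and is essentially identical to the paper's own proof: rescale via Lemma~\ref{E-lemma-1} to unit scale, observe that the rescaled coefficient $A(\cdot/\e')$ is uniformly Lipschitz with constant $M/\e_0$, and invoke the classical critical-set bounds (the paper cites \cite{Naber-2017}, with \cite{Lin-N, Han-1994} for $d=2$). One caveat: your closing ``compactness-contradiction'' alternative does not work as written, since $C^{1,\alpha}$ convergence $u_j\to u_*$ does not by itself force an upper bound on $\#\,\mathcal{C}(u_j)$ --- critical points can merge or vanish in the limit, so finiteness of $\mathcal{H}^{d-2}(\mathcal{C}(u_*))$ does not contradict $\#\,\mathcal{C}(u_j)\to\infty$; stick with the direct citation.
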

 
 \begin{proof}
 Note that by \eqref{E-01},
 $$
 \mathcal{E}(L, \e, r) =\mathcal{E}(L, r^{-1} \e, 1).
 $$
 Since  $r^{-1} \e \ge \e_0$ and $A$ satisfies \eqref{ellipticity} and \eqref{smoothness}, 
 the estimate \eqref{E-03} follows readily  from \cite{Naber-2017} for the operator $\mathcal{L}_1$
(see \cite{Lin-N, Han-1994} for the case $d=2$ and \cite{Han-Lin-H-1998} for the case of smooth coefficients).
 Indeed, the coefficient matrix $\widetilde{A}(x)=A(x/(r^{-1} \e))$ satisfies the Lipschitz condition $\eqref{smoothness}$
 with  $M\e_0^{-1}$ in the place of $M$. Moreover, 
  the conditions $\wN(u, x, 1)\le 2L$ and $\wN(u, x, 1/2)\le L$ for $x\in B(0, 1/2)$ implies that
 $$
 \int_{B(0, 1)} |\nabla u|^2
 \le C \int_{B(0, 1)} u^2
 \le C \int_{\partial B(0, 1)} u^2,
 $$
 where $C$ depends on $L$.
 The periodicity condition is not needed.
 \end{proof}
  
 \begin{thm}\label{E-2-lemma}
 Fix $L\ge 2$ and $\delta_0\in (0, 1/2]$.
 There exists $\e_0>0$, depending on $L$ and $\delta_0$,  such that if $0< \e< \e_0 r $, $\ell \in \mathbb{N}$ and $2\le \ell \le L$, then
 \begin{equation}\label{E-2-0}
 \mathcal{E}(\ell -\delta_0, \e, r) 
 \le C_0 \mathcal{E} (\ell-1 +\delta_0, \e, c_0 r),
 \end{equation}
 where $c_0 =\delta_0/(8\ell)$ and $C_0$ depends on $L$ and $\delta_0$.
 \end{thm}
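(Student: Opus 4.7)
My plan is to establish \eqref{E-2-0} by a Vitali-type covering of $B(0, r/4)$ by $O(c_0^{-d})$ balls of radius $c_0 r/4$, and to show that on each small ball the problem reduces to one where the doubling index has dropped by essentially one, to which the definition of $\mathcal{E}(\ell-1+\delta_0, \e, c_0 r)$ then applies.

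Fix $u \in \mathcal{F}(\ell-\delta_0, \e, r, 0)$ and pick points $x_1,\dots,x_N \in B(0, r/4)$ with $N \le C c_0^{-d}$ so that $B(0, r/4) \subset \bigcup_i B(x_i, c_0 r/4)$. Set $v_i(y) = u(y) - u(x_i)$: by linearity $\mathcal{L}_\e v_i = 0$ in $B(x_i, 2c_0 r) \subset B(0, 2r)$, $v_i(x_i) = 0$, $\mathcal{C}(v_i) = \mathcal{C}(u)$, and unique continuation forces $v_i$ to be nonconstant. The key step is to verify that $v_i \in \mathcal{F}(\ell-1+\delta_0, \e, c_0 r, x_i)$ whenever $\e < \e_0 r$ for a suitable $\e_0 = \e_0(L, \delta_0)$. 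Since $\wN(v_i, y, \rho) = \wN(u, y, \rho)$ for all $\rho$, this amounts to the two bounds
\[
\wN(u, y, c_0 r) \le 2(\ell-1+\delta_0), \qquad \wN(u, y, c_0 r/2) \le \ell-1+\delta_0
\]
for every $y \in B(x_i, c_0 r/2) \subset B(0, r/2)$.

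At any such $y$, the membership $u \in \mathcal{F}(\ell-\delta_0, \e, r, 0)$ provides $\wN(u, y, r) \le 2(\ell-\delta_0)$ and $\wN(u, y, r/2) \le \ell-\delta_0$. Invoking Theorem \ref{fq-thm} at $y$ with the theorem's parameters $(L, \delta_1, \ell)$ taken to be $(2L, \delta_0, \ell)$ gives
\[
\wN(u, y, c_0 r) \le \ell - 1 + \delta_0,
\]
which already delivers the first required inequality. For the second, I would combine this with Theorem \ref{d1-thm}: iterating Theorem \ref{d1-thm} at $y$ starting from radius $r$ (with its $\ell$ unchanged) yields $\wN(u, y, r/2^j) \le \ell + \delta_0$ for every $j$ above the $\e$-scale, hence $\wN(u, y, 2c_0 r) \le \ell + \delta_0$ after a small dyadic adjustment of $c_0$. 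A further application of Theorem \ref{d1-thm}, this time at outer radius $2c_0 r$ and inner bound $\wN(u, y, c_0 r) \le (\ell-1)+\delta_0$ with the theorem-integer taken to be $\ell-1$, then produces $\wN(u, y, c_0 r/2) \le \ell-1+\delta_0$, as required.

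With $v_i \in \mathcal{F}(\ell-1+\delta_0, \e, c_0 r, x_i)$ in hand, the definition of $\mathcal{E}$ yields $\mathcal{H}^{d-2}(\mathcal{C}(u) \cap B(x_i, c_0 r/4)) \le (c_0 r)^{d-2}\,\mathcal{E}(\ell-1+\delta_0, \e, c_0 r)$. Summing over the $\lesssim c_0^{-d}$ indices $i$ and dividing by $r^{d-2}$ gives
\[
\mathcal{E}(\ell-\delta_0, \e, r) \le C\, c_0^{-2}\, \mathcal{E}(\ell-1+\delta_0, \e, c_0 r),
\]
which is \eqref{E-2-0} with $C_0 = C c_0^{-2}$ depending on $L$ and $\delta_0$. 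I expect the main obstacle to be the clean propagation of the doubling index from scale $c_0 r$ down to $c_0 r/2$: because $c_0 = \delta_0/(8\ell)$ is not a dyadic fraction of $r$, verifying the outer hypothesis $\wN(u, y, 2c_0 r) \le L'+1$ of Theorem \ref{d1-thm} requires either replacing $c_0$ by a nearby power of two or a short near-monotonicity argument for $\wN$ on non-dyadic radii above the $\e$-scale.
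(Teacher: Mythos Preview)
Your proposal is correct and follows essentially the same route as the paper's proof: cover $B(0,r/4)$ by $O(c_0^{-d})$ balls of radius $c_0 r/4$, use Theorem~\ref{fq-thm} to drop the doubling index at scale $c_0 r$, and then use Theorem~\ref{d1-thm} once more (with outer bound $\wN(u,y,2c_0 r)\le C(L)$ obtained from the dyadic iteration) to propagate the drop to scale $c_0 r/2$. Your explicit subtraction $v_i=u-u(x_i)$ and your flagging of the non-dyadic issue for $2c_0 r$ are both appropriate; the paper handles the former implicitly and the latter by simply asserting $\wN(u,x_0,2c_0)\le C(L)$, which is exactly the ``short near-monotonicity argument'' you anticipated.
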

 
 \begin{proof}
 In view of  Lemma \ref{E-lemma-1} we may assume $r=1$.
By the definition of $\mathcal{E}(\ell-1+\delta_0, \e, c_0 )$, it suffices to show that 
 if $u\in \mathcal{F}(\ell -\delta_0, \e, 1, 0)$,
 then
 \begin{equation}\label{E-06}
 \wN(u, x_0, c_0) \le \ell-1 +\delta_0 \quad \text{ and } \quad
 \wN (u, x_0, c_0 /2) \le \ell-1 +\delta_0,
  \end{equation}
 for any $x_0\in B(0, 1/2)$,
 provided that $0< \e< \e_0$.
 By covering $B(0, 1/4)$ with a finite number of balls $\{B(y_j, c_0 /4): j=1, 2, \dots, k_0\}$, where $k_0\le  C(d)/ (c_0)^d$ and
  $y_j \in B(0, 1/4)$,
 this would imply  that $u\in \mathcal{F}(\ell-1+\delta_0, \e,  c_0 , y_j )$  for $1\le j \le  k_0$. As a result, 
 $$
 \aligned
 \mathcal{H}^{d-2} (\mathcal{C}(u)\cap B(0, 1/4))
 & \le \sum_j \mathcal{H}^{d-2} (\mathcal{C}(u)\cap B(y_j, c_0 /4))\\
 & \le k_0c_0 ^{d-2}\mathcal{E}(\ell-1+\delta_0, \e, c_0),
 \endaligned
 $$
 from which the estimate \eqref{E-2-0} with $r=1$ follows.
 
 To see \eqref{E-06},  we note that $\wN(u, x_0, 1)\le 2(\ell-\delta_0)$ and $\wN (u, x_0, 1/2) \le \ell -\delta_0$
 for any $x_0\in B(0, 1/2)$.
 By Theorem \ref{fq-thm} we have $\wN(u, x_0, c_0) \le \ell-1 +\delta_0$, where $c_0=\delta_0/(8\ell)$.
 Observe that if $\e$ is sufficiently small, we may use Theorem \ref{d1-thm} to obtain 
 $\wN(u, x_0, 2c_0) \le C(L)$.
 Applying Theorem \ref{d1-thm} again gives $\wN(u, x_0, c_0/2) \le \ell-1 +\delta_0$.
 \end{proof}
 
 \begin{thm}\label{L-lemma-1}
 There exists $\e_0>0$ such that 
 \begin{equation}\label{E-04}
 \mathcal{E} (3/2, \e, r)=0
 \end{equation}
 for any $0< \e< \e_0 r$.
 \end{thm}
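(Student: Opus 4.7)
The plan is to apply Theorem \ref{low-L} pointwise, using the fact that the definition of $\mathcal{F}(3/2, \e, r, x_0)$ is specifically calibrated to its hypotheses. By the reduction \eqref{E-00}, it suffices to show that for any $u \in \mathcal{F}(3/2, \e, r, 0)$ one has $\mathcal{C}(u) \cap B(0, r/4) = \emptyset$, provided $\e$ is small enough relative to $r$.

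Fix $u \in \mathcal{F}(3/2, \e, r, 0)$ and an arbitrary point $y \in B(0, r/4) \subset B(0, r/2)$. From the definition \eqref{F-0} with $L = 3/2$, the two doubling bounds
$$\wN(u, y, r) \le 3 \quad \text{and} \quad \wN(u, y, r/2) \le 3/2$$
hold. The next step is to rescale by setting $v(z) = u(y + rz)$ on $B(0, 1)$. Then $v$ satisfies $\text{\rm div}(\widetilde{A}(z/\widetilde{\e}) \nabla v) = 0$, where $\widetilde{\e} = \e/r$ and $\widetilde{A}(w) = A((y/\e) + w)$. A translation of $A$ preserves ellipticity, the Lipschitz bound, periodicity with respect to the same lattice $\Gamma$, the nondegeneracy condition \eqref{inv-0}, and the normalization \eqref{h0}, so $\widetilde{A} \in \mathcal{A}(\lambda, \Gamma, M, \mu)$. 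Since $\wN$ is invariant under translation and rescaling, we have $\wN(v, 0, 1) \le 3$ and $\wN(v, 0, 1/2) \le 3/2$.

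Now apply Theorem \ref{low-L} with $L = 3$: there exists $\e_0 = \e_0(3) > 0$ such that if $\widetilde{\e} < \e_0$, i.e.\ $\e < \e_0 r$, then $|\nabla v(0)| \neq 0$, which is equivalent to $|\nabla u(y)| \neq 0$. Since $y \in B(0, r/4)$ was arbitrary, this yields $\mathcal{C}(u) \cap B(0, r/4) = \emptyset$, hence $\mathcal{H}^{d-2}(\mathcal{C}(u) \cap B(0, r/4)) = 0$. Taking the supremum over $u \in \mathcal{F}(3/2, \e, r, 0)$ gives \eqref{E-04}.

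There is essentially no obstacle here: the statement is a direct corollary of Theorem \ref{low-L}, and the only point requiring verification is that the translated and rescaled matrix $\widetilde{A}$ still belongs to $\mathcal{A}$ with the same structural constants, so that the smallness threshold $\e_0(3)$ supplied by Theorem \ref{low-L} is uniform and depends only on the admissible class (and on $L = 3$).
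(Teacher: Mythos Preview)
Your proof is correct and follows essentially the same route as the paper's: the paper first invokes Lemma \ref{E-lemma-1} to reduce to $r=1$ and then applies Theorem \ref{low-L} at each $x_0\in B(0,1/2)$, while you carry out the translation and rescaling explicitly in a single step and verify that the shifted matrix $\widetilde{A}$ remains in $\mathcal{A}(\lambda,\Gamma,M,\mu)$. The extra care you take in checking that the structural constants (and hence the threshold $\e_0$) are unaffected by the translation is exactly the point the paper leaves implicit.
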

 
 \begin{proof}
 By Lemma \ref{E-lemma-1} we may assume $r=1$.
 Let $u\in \mathcal{F}(3/2, \e, 1, 0)$.
 Then $\wN(u, x_0, 1) \le 3$ and $\wN(u, x_0, 1/2)\le 3/2$ for any $x_0\in B(0, 1/2)$.
 By Theorem \ref{low-L} we obtain $|\nabla u(x_0)|\neq 0$, if $0< \e< \e_0$.
 Thus $\mathcal{C}(u) \cap B(0, 1/4)=\emptyset$ and consequently, $\mathcal{E}(3/2, \e, 1)=0$.
 \end{proof}

 
 \section{Proof of Theorem \ref{main-thm}}
 
 Throughout this section we assume $d=2$ and $A$ satisfies \eqref{ellipticity}, \eqref{periodicity},  \eqref{smoothness} and \eqref{h0}.
 Note that by Theorem \ref{2d-inv}, the matrix $A$ satisfies the invertibility  condition \eqref{inv-0}.
 
\begin{lemma}\label{2-d-lemma}
 Let $d=2$ and
 fix $L \ge 2$.
 There exist $\e_0, \delta_0\in (0, 1/4)$, depending  on $L$, such that
 if $0< \e< \e_0r $, $\mathcal{L}_\e (u_\e)=0$ in $B(x_0, r)$, $u_\e$ is not constant,
 \begin{equation}\label{90}
 \wN (u_\e, x_0, r) \le 2(\ell +\delta_0),\ \ \  \wN(u_\e, x_0, r/2) \le \ell +\delta_0,
 \end{equation}
 and 
 $u_\e$ has a critical point in $B(x_0, 3r/4)\setminus B(x_0,   r/128)$, where $\ell \in \mathbb{N}$ and $2\le \ell \le L$,  then
 \begin{equation}\label{91}
 \wN(u_\e, x_0,  r/4 ) \le \ell - \delta_0.
 \end{equation}
 \end{lemma}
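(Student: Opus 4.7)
The plan is a compactness/blow-up argument against a negated conclusion, exploiting the rigidity that a nonzero homogeneous harmonic polynomial of degree $\ell\ge 2$ in two dimensions has only the origin as a critical point. Suppose, for contradiction, that for every $\e_0,\delta_0$ the conclusion fails. Then we extract sequences $\e_j\to 0$, $\delta_j\to 0$, and non-constant solutions $u_j$ of $\text{\rm div}(A(x/\e_j)\nabla u_j)=0$ on $B(x_{0,j},r_j)$ satisfying the hypotheses of the lemma with $\delta_0=\delta_j$ and possessing a critical point $z_j\in\overline{B(x_{0,j},3r_j/4)}\setminus B(x_{0,j},r_j/128)$, yet with $\wN(u_j,x_{0,j},r_j/4)>\ell-\delta_j$. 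Translating and rescaling, we assume $x_{0,j}=0$ and $r_j=1$, and choose $\e_j$ small enough that Theorem \ref{d1-thm} applies, with its $L$ taken to be $2L$ and its $\delta_0$ taken to be $\delta_j$.

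Theorem \ref{d1-thm} then gives $\wN(u_j,0,1/4)\le \ell+\delta_j$, while the contradiction hypothesis forces $\wN(u_j,0,1/4)>\ell-\delta_j$; thus $\wN(u_j,0,1/4)\to\ell$ and, combined with $\wN(u_j,0,1/2)\le \ell+\delta_j$, also $\wN(u_j,0,1/2)$ converges to $\ell$ in the limit after we apply monotonicity below. Normalising $\fint_{\partial B(0,1/4)}(u_j-u_j(0))^2=1$ and using the doubling inequalities to control the $L^2$ norms on smaller spheres, Theorem \ref{C-thm} lets us extract a subsequence with $u_j\to u_0$ uniformly on compacta of $B(0,1)$ and with \eqref{co-4} in force, where $u_0-u_0(0)$ is a nonzero harmonic function on $B(0,1)$. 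Uniform convergence transfers the doubling indices to the limit, giving $\wN(u_0,0,1/4)=\ell$ and $\wN(u_0,0,1/2)\le \ell$. The monotonicity of the doubling index for harmonic functions (Almgren's frequency monotonicity) forces $\wN(u_0,0,t)\equiv \ell$ on $[1/4,1/2]$, and the rigidity case then forces $u_0-u_0(0)$ to be a nonzero homogeneous harmonic polynomial of degree $\ell$ on $B(0,1/2)$, hence on $B(0,1)$ by unique continuation.

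To locate a critical point of $u_0$ in the annulus, pass to a further subsequence so that $z_j\to z_0$ with $1/128\le |z_0|\le 3/4$. By Theorem \ref{2d-inv} (applicable since $d=2$ and \eqref{smoothness} is Lipschitz), $\det(I+\nabla\chi)\ge\mu$, so $(I+\nabla\chi(x/\e_j))^{-1}$ is uniformly bounded; combining this with the gradient convergence \eqref{co-4} and with $\nabla u_j(z_j)=0$ yields $\nabla u_0(z_j)\to 0$ and therefore $\nabla u_0(z_0)=0$. However, in two dimensions any nonzero homogeneous harmonic polynomial of degree $\ell\ge 2$ has the form $\mathrm{Re}(cz^\ell)$ in a suitable complex coordinate on $\R^2$ and thus has the origin as its unique critical point, contradicting $|z_0|\ge 1/128>0$. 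The delicate point is to set up the compactness so that the doubling information at $0$ and the off-origin critical point $z_j$ both survive in the limit; once they do, the two-dimensional rigidity of homogeneous harmonic polynomials closes the argument immediately.
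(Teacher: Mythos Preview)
Your proof is correct and follows essentially the same compactness--contradiction strategy as the paper: reduce to $x_0=0$, $r=1$, extract a subsequence converging (via Theorem~\ref{C-thm}) to a harmonic limit $u_0$, use the doubling bounds and Almgren monotonicity to force $u_0-u_0(0)$ to be a nonzero homogeneous harmonic polynomial of degree $\ell$, then derive a contradiction from the surviving off-origin critical point and the invertibility of $I+\nabla\chi$. The only difference is that you first invoke Theorem~\ref{d1-thm} to pin down $\wN(u_j,0,1/4)\to\ell$ before passing to the limit, whereas the paper skips this and simply uses $\wN(u_0,0,1/2)\le\ell$ together with $\wN(u_0,0,1/4)\ge\ell$ and monotonicity of the harmonic limit; your detour is harmless but unnecessary.
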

 
 \begin{proof}
 By translation and dilation it suffices to consider the case $x_0=0$ and $r=1$.
 To prove  \eqref{91},
we argue by contradiction.
 Suppose there exist  sequences $\{\e_j \}\subset \R_+$ and  $\{ u_j \}\subset H^1(B(0, 1))$  such that
 $\e_j \to 0$, $ \text{\rm div}(A^j (x/\e_j)\nabla u_j )=0$ in $B(0, 1)$ for some
 $A^j$ satisfying \eqref{ellipticity}, \eqref{periodicity},  \eqref{smoothness} and \eqref{h0}, $u_j$ is not constant,
 $\nabla u_j (y_j)=0$ for some $y_j\in B(0, 3/4)\setminus B(0, 1/128)$,
 $$
\wN(u_j, 0, 1) \le 2(\ell + (1/j)) , \ \ \wN(u_j, 0, 1/2) \le \ell + (1/j),
 $$
 and that 
 $$
 \wN(u_j, 0, 1/4) > \ell - (1/j).
 $$
 We may assume that $u_j (0)=0$ and 
 $$
 \fint_{\partial B(0, 1/2)} u_j^2 =1.
 $$
 Since $\wN(u_j, 0, 1) \le 2\ell +2$, this implies that 
  $\{ u_j \}$ is bounded in $L^2(\partial B(0, 1))$.
 It follows that $\{ u_j \}$ is bounded in $L^2(B(0, 1))$. Thus, in view of Theorem \ref{C-thm}, 
 by passing to a subsequence, we may assume that $u_j\to u_0$ weakly in $L^2(B(0, 1))$ and strongly in $L^2(B(0, r))$ for any $0<r<1$,
 where $u_0$ is harmonic in $B(0, 1)$.
 Moreover, 
  \begin{equation}\label{f-100}
 \| u_j -u_0 \|_{L^\infty(B(0, 3/4))} \to 0,
 \end{equation}
 and
 \begin{equation}\label{101}
 \| \nabla u_j -(I+\nabla \chi^j (x/\e_j)) \nabla u_0\|_{L^\infty(B(0, 3/4))} \to 0,
 \end{equation}
where $\chi^j$ denotes the first-order correctors for the matrix $A^j$.
 
 Next, by letting $j \to \infty$, we obtain $u_0(0)=0$ and 
 $$
 1= \fint_{\partial B(0, 1/2)} u_0^2.
 $$
 Hence, $u_0$ is not  constant. Moreover, 
 $$
 \wN(u_0, 0, 1/2) \le \ell \quad \text{ and } \quad 
 \wN (u_0, 0, 1/4) \ge \ell.
 $$
 By the monotonicity of $\wN (u_0, 0, r)$ for harmonic functions, we obtain 
 $$
 \wN(u_0, 0, 1/2)=\wN (u_0, 0,1/4 )=\ell.
 $$
 It follows that $u_0$ is a homogeneous harmonic polynomial of degree $\ell$.
 Since $d=2$, this implies  that $|\nabla u_0 (x)| \neq 0$ for any $x\neq 0$.
 However, since $|\nabla u_j (y_j)| =0$ and 
 $$
  \text{det}(I+\nabla \chi^j (y_j/\e_j) ) \ge \mu>0,
 $$
 in  view of \eqref{101}, we conclude that  $|\nabla u_0 (y_j)| \to 0$ as $j \to \infty$.
 Since $1\ge |y_j | \ge (1/128)$,
 we obtain a contradiction.
    \end{proof}
 
\begin{lemma}\label{2d-lemma-1}
Fix $L\ge 2$.
There exist $ \e_0, \delta_0, \theta \in (0, 1/4)$, depending on $L$,  such that
  if $\e_0^{-1}  \e\le r \le 1$, $\ell\in \mathbb{N}$ and $2\le \ell \le  L$, 
 \begin{equation}\label{f-1}
 \mathcal{E}(\ell +\delta_0, \e, r)
 \le \max \big\{  \mathcal{E} (\ell +\delta_0, \e,   r /2),  C_0 \mathcal{E} (\ell-1 +\delta_0, \e,  \theta  r)
  \big\},
 \end{equation}
 where $C_0$ depends on $L$.
 \end{lemma}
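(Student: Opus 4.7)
The plan is to split $\mathcal{C}(u) \cap B(0, r/4)$ into two cases depending on whether the annulus $B(0, r/4) \setminus B(0, r/128)$ contains a critical point of $u$. Fix $u \in \mathcal{F}(\ell + \delta_0, \e, r, 0)$ nearly realizing the supremum defining $\mathcal{E}(\ell + \delta_0, \e, r)$. In Case A, assume $\mathcal{C}(u) \cap B(0, r/4) \subset B(0, r/128)$; then the critical set already lies in $B(0, (r/2)/4)$. Theorem \ref{d1-thm} propagates the uniform bound $\wN(u, x, r/2) \le \ell + \delta_0$ on $B(0, r/2)$ down to $\wN(u, x, r/4) \le \ell + \delta_0$ on $B(0, r/4)$, verifying $u \in \mathcal{F}(\ell + \delta_0, \e, r/2, 0)$ and giving $\#(\mathcal{C}(u) \cap B(0, r/4)) \le \mathcal{E}(\ell + \delta_0, \e, r/2)$.

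In Case B, pick a critical point $y \in B(0, r/4) \setminus B(0, r/128)$. Since $|y| \le r/4$, for every $x \in B(0, r/2)$ with $|x - y| \ge r/128$ the point $y$ sits in the annulus $B(x, 3r/4) \setminus B(x, r/128)$, so Lemma \ref{2-d-lemma} applied at $x$ delivers the drop $\wN(u, x, r/4) \le \ell - \delta_0$. Theorem \ref{fq-thm} at $x$ with scale $r/2$ and $\delta_1 = \delta_0$ then yields $\wN(u, x, \theta r) \le \ell - 1 + \delta_0$ with $\theta := \delta_0/(16\ell)$, and a further application of Theorem \ref{d1-thm} propagates this to $\wN(u, x, \theta r/2) \le \ell - 1 + \delta_0$. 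Now cover $B(0, r/4)$ by $O(\theta^{-2})$ balls $B(y_j, \theta r/4)$ with $y_j \in B(0, r/4)$. Good centers---those with $B(y_j, \theta r/2) \subset B(0, r/2) \setminus B(y, r/128)$---satisfy $(u - u(y_j)) \in \mathcal{F}(\ell - 1 + \delta_0, \e, \theta r, y_j)$ and contribute at most $\mathcal{E}(\ell - 1 + \delta_0, \e, \theta r)$ critical points apiece. Bad centers---those within distance $r/128 + \theta r/2$ of $y$---produce balls whose union lies in $B(y, r/128 + \theta r) \subset B(y, (r/2)/4)$ for $\theta$ small; since $|y| \le r/4$ the same propagation gives $(u - u(y)) \in \mathcal{F}(\ell + \delta_0, \e, r/2, y)$, bounding the bad contribution by $\mathcal{E}(\ell + \delta_0, \e, r/2)$. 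Summing yields a bound of the form $\mathcal{E}(\ell + \delta_0, \e, r/2) + C \theta^{-2} \mathcal{E}(\ell - 1 + \delta_0, \e, \theta r)$, which is absorbed into the stated maximum by taking $C_0 = O(\ell^2 / \delta_0^2)$ and adjusting constants.

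The central difficulty is that Lemma \ref{2-d-lemma} yields a doubling-index drop only at a single center, whereas the covering argument demands such a drop uniformly across a neighborhood. The key observation is that the same critical point $y$ can serve as the ``annulus witness'' for Lemma \ref{2-d-lemma} at every $x \in B(0, r/2) \setminus B(y, r/128)$ simultaneously, so pointwise application of the lemma delivers the required uniform drop across the good region. The residual bad neighborhood $B(y, r/128)$---where the annulus condition fails for $x$ too close to $y$---is handled by choosing $\theta$ on the order of $\delta_0/\ell$ so that the bad region fits inside $B(y, (r/2)/4)$, at which point its critical-point count is absorbed by the $\mathcal{E}(\ell + \delta_0, \e, r/2)$ term after re-centering the function at $y$.
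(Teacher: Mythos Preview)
There is a genuine gap in Case~B. Your bound there is
\[
\#\big(\mathcal{C}(u)\cap B(0,r/4)\big)\;\le\;\mathcal{E}(\ell+\delta_0,\e,r/2)\;+\;C\theta^{-2}\,\mathcal{E}(\ell-1+\delta_0,\e,\theta r),
\]
a \emph{sum} of the two terms, and the claim that this sum ``is absorbed into the stated maximum'' is false: for nonnegative $A,B$ there is no fixed $C_0$ with $A+B\le\max\{A,C_0 B\}$ (take $B=1$ and let $A\to\infty$). The distinction is essential, because the proof of Theorem~\ref{main-thm} iterates the lemma in $r$: from $\mathcal{E}(\ell+\delta_0,\e,r)\le\max\{\mathcal{E}(\ell+\delta_0,\e,r/2),\,C\}$ one obtains a bound independent of the number of iterations, whereas from $\mathcal{E}(\ell+\delta_0,\e,r)\le\mathcal{E}(\ell+\delta_0,\e,r/2)+C$ one gets $\mathcal{E}(\ell+\delta_0,\e,2^{-j}r)+jC$, which grows like $\log(r/\e)$ and destroys the $\e$-uniformity.

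The source of the trouble is the ``bad'' neighborhood $B(y,r/128)$ in Case~B, where your single witness $y$ fails the annulus hypothesis of Lemma~\ref{2-d-lemma}. The paper avoids this by choosing the dichotomy differently: instead of asking whether a fixed annulus about the origin contains a critical point, it takes a Vitali subcover of $\mathcal{C}(u)\cap B(0,r/4)$ by balls of radius $r/40$ centered at critical points $y_1,\dots,y_{k_0}$. If $k_0=1$ all critical points are clustered in $B(y_1,r/8)$ and one lands in $\mathcal{F}(\ell+\delta_0,\e,r/2,y_1)$, giving the first term of the max alone. If $k_0\ge 2$ the Vitali separation $|y_i-y_j|\ge r/20$ guarantees that for \emph{every} $x\in B(y_j,r/4)$ some $y_i$ lies in the annulus $B(x,3r/4)\setminus B(x,r/128)$ (take $y_j$ if $|x-y_j|\ge r/128$, and any $y_i$ with $i\ne j$ otherwise), so Lemma~\ref{2-d-lemma} drops the index everywhere and there is no residual bad region --- the bound is purely the second term. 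Your split can be repaired along the same lines: replace Case~A/Case~B by ``all critical points in $B(0,r/4)$ lie in a single ball of radius $r/40$'' versus ``there exist two critical points $y,y'$ in $B(0,r/4)$ with $|y-y'|\ge r/20$''; in the latter case the pair $\{y,y'\}$ supplies an annulus witness for every relevant $x$.
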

 
 \begin{proof}
 By Lemma \ref{E-lemma-1} we may assume $r=1$.
 Let $u \in \mathcal{F} (\ell +\delta_0, \e, 1, 0)$, where $\delta_0\in (0, 1/4)$ is given by Lemma \ref{2-d-lemma}.
 Consider the cover
 $$
 \{ B(x, 1  /40): x\in \mathcal{C}(u)\cap B(0, 1 /4) \}.
 $$
Let $\{ B(y_j, 1 /40): j=1, 2, \dots, k_0\}$ be a Vitali subcover; i.e., $y_j \in \mathcal{C}(u)\cap B(0, 1/4)$, 
$$
\mathcal{C}(u) \cap B(0, 1/4) \subset \bigcup_{j=1}^{k_0} B(y_j, 1   /8),
$$
and $B(y_i, 1  /40)\cap B(y_j, 1  /40)=\emptyset $ for $i\neq j$. 
We have two cases: $k_0=1$ and $k_0\ge 2$.
Note that $1\le k_0\le C_0$ for some absolute constant $C_0$.

If $k_0=1$, then
$$
\mathcal{C}(u) \cap B(0, 1/4) \subset \mathcal{C}(u) \cap B(y_1,1 /8).
$$
Since  $u\in \mathcal{F}(\ell+\delta_0, \e, 1, 0)$ and $B(y_1, 1 /4) \subset B(0, 1/2)$, we have $u \in \mathcal{F}(\ell+\delta_0,\e,  y_1,1/2 )$.
It follows that 
\begin{equation}\label{f-1-1}
\# (\mathcal{C}(u)\cap B(0, 1/4))
\le \mathcal{E} (\ell+\delta_0, \e,1/2 ).
\end{equation}
 
 Suppose $k_0\ge 2$.
 Then $u \in \mathcal{F}(\ell-\delta_0, \e, y_j, 1/2 )$ for $1\le j \le k_0$.
 Indeed, let $x\in B(y_j, 1 /4)$.
 If $|x-y_j |\ge (1 /128)$, then $y_j \in B(x, 1/2) \setminus B(x, 1 /128)$.
 On the other hand, if $|x-y_j|<( 1 /128)$ and $i\neq j$, 
 then
 $$
 \aligned
 |y_i-x|  & \ge |y_i-y_j| -|y_j -x|\\
&  \ge (1 /20) - (1 /128)\ge (1 /128),
 \endaligned
 $$
 and
 $$
 \aligned
 |y_i-x| & \le |y_i-y_j | + |y_j -x|\\
&  < (1/2) + (1/128)< (3/4).
 \endaligned
 $$
 Hence,  $y_i \in B(x, 3/4) \setminus B(x, 1 /128)$ for $i \neq j$.
 In both cases, 
 by Lemma \ref{2-d-lemma},  we obtain 
 $$
\wN(u, x,1/2 ) \le\ell+\delta_0\le  2(\ell -\delta_0) \quad \text{ and } \quad
\wN(u, x, 1/4 ) \le \ell-\delta_0,
 $$
 for any $x\in B(y_j, 1/4)$, provided that $0< \e< \e_0$.
 As a result, $u \in \mathcal{F}(\ell-\delta_0, \e, y_j, 1/2 )$ for $1 \le j \le k_0$.
 It follows that
 \begin{equation}\label{f-800}
 \aligned
 \# ( \mathcal{C}(u) \cap B(0, 1/4))
&  \le \sum_{j=1}^{k_0}
 \# (\mathcal{C}(u) \cap B(y_j, 1/8 ))\\
 & \le k_0 \mathcal{E} (\ell-\delta_0, \e, 1/2 )\\
 &\le C_0 \mathcal{E} (\ell-1+\delta_0, \e, c_0/2 ),
 \endaligned
 \end{equation}
 where $c_0= \delta_0/(8 \ell)$ and
 we have used Theorem \ref{E-2-lemma} for the last inequality.
 By \eqref{E-0a} we
 may replace $c_0/2$ in \eqref{f-800} by $\theta = \delta_0/(32L)$.
 \end{proof}
  \medskip
  
    We are now in a position to give the proof of Theorem \ref{main-thm}.
  
  \begin{proof}[\bf Proof of Theorem \ref{main-thm}]
  It suffices to consider the case $0< \e< \e_0$, where $\e_0=\e_0(N)>0$ is sufficiently small.
  The case $\e\ge \e_0$ is covered by  \cite{Han-1994, Naber-2017}.
  
  Let $u_\e \in H^1(E_2)$ be a non-constant solution of $ \text{\rm div} (A(x/\e) \nabla u_\e)=0$ in $E_2$, 
  where  $A$ satisfies the conditions \eqref{ellipticity}, \eqref{periodicity} and  \eqref{smoothness}.
  Suppose that $u_\e (0)=0$ and the doubling condition \eqref{doubling-0} holds for some $N\ge 1$.
  Since $d=2$, by Theorem \ref{2d-inv}, the invertibility  condition \eqref{inv-0} is satisfied.
  By a change of variables we may assume $\widehat{A}+( \widehat{A})^T=2I$.
  As a result, $E_r=B(0, r)$ and $u_\e$ satisfies the condition 
  \begin{equation}\label{f-189}
  \fint_{B(0, 2)} u_\e^2 \le  4^N \fint_{B(0, 1)} u_\e^2.
  \end{equation}
  By the doubling inequality for $u_\e$ in \cite[Theorem 1.2]{LS-Nodal},  this gives 
   \begin{equation}\label{f-188}
  \fint_{B(x, r)} u_\e^2 \le  C(N) \fint_{B(x, r/2)} u_\e^2,
  \end{equation}
  for any $x\in B(0, 3/4)$ and  $0< r< 1$. Hence, for $x\in B(0,1/2)$ and $1/2\le r\le 1$,
  $$
  \aligned
  \fint_{\partial B(x, r)} u_\e^2
 &   \le C\fint_{B(x, 5r/4)} u_\e^2\\
&  \le C \fint_{B(x, r/2)} u_\e^2
  \le C \fint_{\partial  B(x, r/2)} u_\e^2,
  \endaligned
  $$
  where $C$ depends on $N$. Consequently, $\wN(u_\e, x, 1) \le L$ and $\wN(u_\e, x, 1/2)\le L$
  for any $x\in B(0, 1/2)$, where $L$ depends on $N$.
  This shows that $u_\e \in \mathcal{F} (L, \e, 1, 0)$ for some integer $L\ge 2$.
  
Let $\e_0, \delta_0 \in (0, 1/4)$ be given by Lemma \ref{2d-lemma-1}.  
We assume that $\e_0$ is so small that Theorem \ref{L-lemma-1} holds.
  We will show that for any $\ell \in \mathbb{N}$ and $1\le \ell \le L$,
  \begin{equation}\label{f-200}
  \mathcal{E} (\ell +\delta_0, \e, r) \le C,
  \end{equation}
  where $0< r\le 1$ and $C$ depends on $L$.
  This yields 
  \begin{equation}\label{f-170}
  \#( \mathcal{C}(u_\e)\cap B(0, 1/4)) \le C(N).
  \end{equation}
  By a simple covering argument we replace $B(0, 1/4)$ in \eqref{f-170} by $B(0, 1/2)$.

To prove  the estimate  \eqref{f-200}, we use an induction argument on $\ell$.
  To this end, we first note that  \eqref{f-200} holds for $\ell =1$. Indeed, if $0< \e< \e_0r $,
  $$
  \mathcal{E} (1+\delta_0, \e, r)
  \le \mathcal{E}(3/2, \e, r)=0,
  $$
  by Theorem \ref{L-lemma-1}.
  If $\e\ge \e_0 r$, we may use Theorem \ref{L-lemma} to obtain 
  $$
  \mathcal{E} (1+\delta_0, \e, r) \le C(L, \e_0).
  $$
   Next, suppose \eqref{f-200} holds for some $\ell<L$.
      If $0< \e< \e_0r$, we use Lemma \ref{2d-lemma-1} to obtain 
   \begin{equation}\label{f-199}
   \mathcal{E} (\ell+1+\delta_0, \e, r) 
   \le \max \big\{ \mathcal{E} (\ell+1+ \delta_0, \e, r/2),   C \big\},
   \end{equation}
   where $C$ depends on $L$.
  By Theorem \ref{L-lemma}, the estimate above also holds for $\e \ge \e_0 r$.
  By an induction argument  on $j$, this implies that 
   \begin{equation}\label{f-198}
   \mathcal{E} (\ell+1+\delta_0, \e, r) 
   \le \max \big\{ \mathcal{E} (\ell+1+ \delta_0, \e, 2^{-j} r),   C \big\}
   \end{equation}
   for any $j \ge 1$. 
   Finally, we  choose $j $ so large that $2^{-j} r \le \e_0^{-1} \e$.
   By Theorem \ref{L-lemma} we obtain 
   $$
   \mathcal{E} (\ell+1+\delta_0, \e, r) \le C,
   $$
   which completes the proof of \eqref{f-200}.

    \end{proof}


 \bibliographystyle{amsplain}
 
\bibliography{Lin-Shen-2021.bbl}

\bigskip

\begin{flushleft}

Fanghua Lin, 
Courant Institute of Mathematical Sciences, 
251 Mercer Street, 
New York, NY 10012, USA.

Email: linf@cims.nyu.edu

\bigskip

Zhongwei Shen,
Department of Mathematics,
University of Kentucky,
Lexington, Kentucky 40506,
USA.

E-mail: zshen2@uky.edu
\end{flushleft}

\bigskip

\medskip

\end{document}